\documentclass[12pt]{amsart}



\usepackage{amssymb}

\usepackage{enumitem}

\usepackage{graphicx}

\makeatletter
\@namedef{subjclassname@2010}{%
  \textup{2010} Mathematics Subject Classification}
\makeatother

\usepackage[T1]{fontenc}


\newtheorem{theorem}{Theorem}[section]

\newtheorem{lemma}[theorem]{Lemma}

\newtheorem{question}[theorem]{Question}



\theoremstyle{definition}



\numberwithin{equation}{section}


\frenchspacing

\textwidth=13.5cm
\textheight=23cm
\parindent=16pt
\oddsidemargin=-0.5cm
\evensidemargin=-0.5cm
\topmargin=-0.5cm





\begin{document}


\baselineskip=17pt


\title[Conformal gradient vector fields on  Riemannian manifolds]{Conformal gradient vector fields on Riemannian manifolds with boundary}

\author[I. Evangelista]{Israel Evangelista}
\address{Universidade Federal do Delta do Parna\'iba- UFDPar, Curso de Matem\'atica, Parna\'iba /PI, Brazil}
\email{israelevangelista@ufpi.edu.br}

\author[E. Viana]{Emanuel Viana}
\address{Current: Universidade Federal do Ceara - UFC, Departamento de Matem\'{a}tica, Campus do Pici, Fortaleza /CE , Brazil.
	Permanent: Instituto Federal de Educa\c c\~ao, Ci\^encia e Tecnologia do Cear\'a (IFCE), Campus Caucaia, Caucaia /CE, Brazil.}
\email{ielviana@gmail.com; emanuel.mendonca@ifce.edu.br}

\date{}

\begin{abstract}
Let $(M^n,g)$ be an $n$-dimensional compact connected Riemannian manifold with smooth boundary. We show that the presence of a nontrivial conformal gradient vector field on $M$, with an appropriate control on the Ricci curvature, causes  $M$ to be isometric to a hemisphere of $\mathbb{S}^{n}$. We also prove that if an Einstein manifold with boundary admits nonzero conformal gradient vector field, then its scalar curvature is positive and it is isometric to a hemisphere of $\mathbb{S}^{n}$. Furthermore, we prove that if $ M $ admits a nontrivial conformal vector field and has constant scalar curvature, then the scalar curvature is positive. Finally, a suitable control on the energy of a conformal vector field implies that $M$ is isometric to a hemisphere $\mathbb{S}^n_+$.
\end{abstract}

\subjclass[2010]{Primary: 53C20, 53A30}

\keywords{Conformal gradient vector fields, hemisphere of the Euclidean sphere, manifolds with boundary}

\maketitle

\section{Introduction}
Let $(M^n,g)$, $n\geq 2$, be an $n$-dimensional compact smooth oriented Riemannian manifold with smooth boundary $\partial M$. We denote by $\overline{\nabla}$, $\overline{\nabla}^2$, $\overline{\Delta}$ and $dM$ the Riemannian connection, the Hessian, the Laplacian and the volume form on $M$, respectively, while by $\nabla$, $\Delta$ and $d\sigma$ the Riemannian connection, the Laplacian and the volume form on $\partial M$, respectively. We also denote by $h(X,Y)=g(\overline{\nabla}_X\,\nu,Y),\,\,X,Y\in \mathfrak{X}(M)$, the second fundamental form associated to the unit outward normal vector field $\nu$ along $\partial M$, where  $ \mathfrak{X}(M)$ is the Lie algebra  of smooth vector field on $M$. We recall that a smooth vector field $\xi\in  \mathfrak{X}(M)$ is said to be \textit{conformal} if
\begin{equation}\label{eqconformalfield}
\mathcal{L}_{\xi}g=2f g
\end{equation}
for a smooth function $f$ on $M$, where $\mathcal{L}_{\xi}$ is the Lie derivative in the direction of $\xi$. The function $f$ is the conformal factor of $\xi$ (cf. \cite{besse}). If $\xi$ is the gradient of a smooth function  on $M$, then $\xi$ is said to be a \textit{conformal gradient vector field}. In this case,  $\xi$ is  also closed. We say that $\xi$ is a nontrivial conformal vector field if it is a non-Killing conformal vector field.
As a straightforward consequence of Koszul's formula, we have the following identity for any smooth vector field $Z$ on $M$,
\begin{equation*}
	2g(\overline{\nabla}_X Z,Y)=\mathcal{L}_{Z}g(X,Y)+d\eta (X,Y), \,\,\,X,Y\in\mathfrak{X}(M),
\end{equation*}
where $\eta$ stands for the dual 1-form associated to $Z$, that is, $\eta(Y)=g( Z,Y )$. We note that we can define $\varphi$
the following skew symmetric (1,1)-tensor:
\begin{equation*}
	d\eta(X,Y)=2g(\varphi(X),Y),\,\,\,X,Y\in\mathfrak{X}(M).
\end{equation*}
Thereby,  one can use the above equations to get
\begin{equation}\label{eqconf2}
\overline{\nabla}_X\xi=fX+\varphi(X), \,\,\, X\in \mathfrak{X}(M).
\end{equation}
The function $f$ will be called the potential function associated to $\xi$. Note that we can identify $\varphi$ with a  skew symmetric (0,2)-tensor and
$\xi$ with the tensor $\xi(Y)=g(\xi,Y), \,\,Y\in \mathfrak{X}(M)$, to rewrite \eqref{eqconf2} as follows
\begin{equation}\label{eqderxitens}
\overline{\nabla} \xi=fg+\varphi.
\end{equation}

One of the interesting questions in the geometry of Riemannian manifolds is to characterize spheres among the class of compact connected Riemannian manifolds.  One of such characterizations was given by  Obata \cite{obata}, namely a necessary and sufficient condition for an $n$-dimensional complete Riemannian manifold $(M^n,g)$ to be isometric to the $n$-sphere $\mathbb{S}^n(c)$ is that there exists a nonconstant smooth function $f$ on $M$ that satisfies $$\overline{\nabla}_{X}\overline{\nabla} f=-cfX,\,X\in \mathfrak{X}(M),$$ for some constant $c>0$, where $\overline{\nabla}_{X}$ is the covariant derivative operator with respect to $X\in \mathfrak{X}(M)$.

On the other hand, in the middle of the last century many authors  have extensively studied Riemannian manifolds with constant scalar curvature admitting an infinitesimal non-isometric conformal transformation. At that time many famous geometers tried to prove a conjecture concerning the Euclidean sphere as the unique compact orientable Riemannian manifold $M^n$ admitting a metric of constant scalar curvature  $R$ carrying a conformal vector field $X$. Among them,
we cite Bochner, Goldberg \cite{Goldberg1, Goldberg2}, Hsiung \cite{Hsiung}, Lichnerowicz, Nagano \cite{Nagano}, Obata \cite{Obata1} and Yano \cite{ny}; we refer the reader to the book of Yano \cite{ Yano-Book} for a summary of those results. Despite  many efforts to prove the conjecture, it remained opened until 1981 when Ejiri \cite{Ejiri} found  a counterexample to this conjecture by constructing metrics of constant scalar curvature on warped products of type $\mathbb{S}^{1}\times_{h}N$, where $N$ is an $n-1$ dimensional Riemannian manifold with positive constant scalar curvature, $h$ is a positive function on a circle $\mathbb{S}^{1}$ satisfying a certain ordinary differential equation and $X=h\frac{\partial}{\partial t}$ is a  conformal vector field, see \cite{Ejiri} for details.

In this sense, seeking to characterize the sphere, a natural question arises: Under which condition an $n$-dimensional closed and connected Riemannian manifold that admits a nonzero conformal gradient vector field is isometric to a sphere $\mathbb{S}^n$? Deshmukh and Al-Solamy \cite{deshmuk}  answered this question by proving that if $(M^n,g)$ is an $n$-dimensional compact connected Riemannian manifold which  admits a nonzero conformal gradient vector field and whose Ricci curvature satisfies $0<Ric\leq \displaystyle (n -1)\Big(2 -\frac{nc}{\lambda_1}\Big)c$, where $c$ is a positive constante and $\lambda_1$ is is the first nonzero eigenvalue of the Laplace operator, then M is isometric to $\mathbb{S}^{n}(c)$.

The question of trying to characterize the hemispheres goes now for manifolds with nonempty smooth boundary. In this direction, Reilly \cite{reilly} proved that a  compact Riemannian manifold $M$ with totally geodesic boundary, which admits a nonconstant function $f$ on $M$ such that  $\overline{\nabla}^2 f=-c fg$, for some constant $c>0$, $f\geq 0$ on $M$ and $f=0$ on $\partial M$, is necessarily isometric to a hemisphere of $\mathbb{S}^{n}(c)$. In the same direction, Reilly \cite{Reilly2} also proved that if a  compact, connected, oriented Riemannian manifold $M$ with connected nonempty boundary $\partial M$  admits a nonconstant function $f$ on $M$ which  satisfies  $\overline{\nabla}^2 f=-c fg$, for some constant $c>0$, and $f|_{\partial M}$ is constant, then $M$  is isometric to a geodesic ball on $\mathbb{S}^n(c)$.

In this way, we can ask the following question about manifolds with nonempty boundary:
\begin{question}
	Under which condition  an $n$-dimensional compact and connected Riemannian manifold with smooth boundary that admits a nonzero conformal gradient vector field is isometric to a hemisphere $\mathbb{S}_{+}^n$?
\end{question}

In this paper, we answer this question affirmatively under some additional conditions, more exactly  we prove the following:

\begin{theorem} \label{thM}
	Let $(M^n,g)$ be a compact connected Riemannian manifold with nonempty boundary such that the Ricci curvature satisfies $$0<Ric\leq (n-1)\big(2-\dfrac{nc}{\lambda_1}\big)c,$$ for a positive constant  $c$, where $\lambda_1$ is the first nonzero eigenvalue of the Laplace operator with Dirichlet boundary condition. Let $\overline{\nabla} f$ be a nonzero conformal vector field on $M$ such that  $\overline{\Delta} f=0$ on $\partial M$. Then $M^n$ is isometric to a hemisphere $\mathbb{S}_{+}^n(c)$.
\end{theorem}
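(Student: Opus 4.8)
The approach is to carry over the Bochner-type integral argument of Deshmukh and Al-Solamy \cite{deshmuk} from the closed case, exploiting the boundary condition $\overline{\Delta} f=0$ on $\partial M$ to make all the boundary integrals vanish, and to close the argument with Reilly's rigidity theorem \cite{reilly}. Since $\overline{\nabla} f$ is a closed conformal vector field, the skew part $\varphi$ in \eqref{eqconf2} vanishes and its covariant derivative is pure trace, i.e. $\overline{\nabla}^2 f=\psi\,g$ with $\psi:=\tfrac1n\overline{\Delta} f$ the conformal factor of $\overline{\nabla} f$; the hypothesis then says precisely that $\psi|_{\partial M}=0$. Differentiating $\overline{\nabla}_X\overline{\nabla} f=\psi X$ and contracting the resulting Ricci identity yields the classical pointwise identity
\[
Ric(\overline{\nabla} f)=-(n-1)\,\overline{\nabla}\psi,
\]
where $Ric$ is regarded as a $(1,1)$-tensor. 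Note that $\psi$ cannot vanish identically: otherwise $R(\cdot,\cdot)\overline{\nabla} f\equiv 0$, so $Ric(\overline{\nabla} f,\overline{\nabla} f)\equiv 0$, contradicting $Ric>0$ together with $\overline{\nabla} f\not\equiv 0$; and as $\psi|_{\partial M}=0$ with $\partial M\neq\emptyset$, a nonzero constant is excluded too. Hence $\psi$ is nonconstant, $f$ is nonconstant, and $\int_M\psi^2\,dM>0$.

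Now I would derive two integral identities. Because $\psi|_{\partial M}=0$, the divergence theorem applied to $\psi\,\overline{\nabla} f$ together with $\overline{\Delta} f=n\psi$ gives $\int_M\langle\overline{\nabla}\psi,\overline{\nabla} f\rangle\,dM=-n\int_M\psi^2\,dM$, and combining this with the pointwise identity above,
\[
\int_M Ric(\overline{\nabla} f,\overline{\nabla} f)\,dM=n(n-1)\int_M\psi^2\,dM.
\]
Writing $K:=\big(2-\tfrac{nc}{\lambda_1}\big)c$ (the hypothesis forces $K>0$), the bound $Ric\le(n-1)K\,\mathrm{Id}$ together with $Ric\ge0$ gives $Ric^2\le(n-1)K\,Ric$, so squaring the pointwise identity yields $(n-1)^2|\overline{\nabla}\psi|^2=|Ric(\overline{\nabla} f)|^2\le(n-1)K\,Ric(\overline{\nabla} f,\overline{\nabla} f)$ pointwise; integrating and using the previous display, $\int_M|\overline{\nabla}\psi|^2\,dM\le nK\int_M\psi^2\,dM$. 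Since $\psi$ is nonconstant and vanishes on $\partial M$, the Rayleigh characterization of the first Dirichlet eigenvalue gives $\lambda_1\int_M\psi^2\,dM\le\int_M|\overline{\nabla}\psi|^2\,dM$, whence $\lambda_1\le nK$, i.e. $(\lambda_1-nc)^2\le0$. Therefore $\lambda_1=nc$, $K=c$, and every inequality just used is an equality.

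From the equality cases I extract that $\psi$ is a first Dirichlet eigenfunction, $\overline{\Delta}\psi=-nc\,\psi$, and that $|Ric(\overline{\nabla} f)|^2=(n-1)c\,Ric(\overline{\nabla} f,\overline{\nabla} f)$ pointwise; since $Ric>0$ has trivial kernel, the latter forces $\overline{\nabla} f$ to be, at every point, an eigenvector of $Ric$ with eigenvalue $(n-1)c$, i.e. $Ric(\overline{\nabla} f)=(n-1)c\,\overline{\nabla} f$. Feeding this back into the pointwise identity gives $\overline{\nabla}\psi=-c\,\overline{\nabla} f$, hence $\psi=a-cf$ for a constant $a$; with $\tilde f:=f-a/c$ we get $\overline{\nabla}^2\tilde f=-c\,\tilde f\,g$ and, since $\psi$ vanishes on $\partial M$, $\tilde f|_{\partial M}=0$. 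Moreover $\tilde f=-\psi/c$ is a first Dirichlet eigenfunction, hence does not change sign, so after replacing $\tilde f$ by $-\tilde f$ if necessary we may assume $\tilde f\ge 0$ on $M$; it is nonconstant since $f$ is. Finally, restricting $\overline{\nabla}^2\tilde f=-c\tilde f g$ to $\partial M$, where the intrinsic Hessian of $\tilde f|_{\partial M}\equiv0$ drops out, gives $(\partial_\nu\tilde f)\,h=0$ on $\partial M$; and $\partial_\nu\tilde f$ is nowhere zero on $\partial M$, for a zero of it would be a common zero of $\tilde f$ and $\overline{\nabla}\tilde f$ and would force $\tilde f\equiv0$ (by the ODE $\tfrac{d^2}{dt^2}\tilde f(\gamma(t))=-c\,\tilde f(\gamma(t))$ along geodesics $\gamma$). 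Hence $\partial M$ is totally geodesic, and Reilly's theorem \cite{reilly}, applied to $\tilde f$, gives that $M^n$ is isometric to the hemisphere $\mathbb{S}^n_+(c)$.

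The main obstacle I anticipate is the bookkeeping at the boundary: recognizing that $\overline{\Delta} f=0$ on $\partial M$ is exactly what makes $\psi$ an admissible test function for the first Dirichlet eigenvalue and, simultaneously, annihilates every boundary term produced by the integrations by parts, and then — once the Obata-type equation $\overline{\nabla}^2\tilde f=-c\tilde f g$ with $\tilde f|_{\partial M}=0$ is in hand — proving that $\partial M$ is totally geodesic so that Reilly's rigidity result becomes applicable. It should also be emphasized that the strict positivity of $Ric$ (rather than just $Ric\ge0$) is used in an essential way, both to exclude $\psi\equiv0$ and, crucially, in the equality analysis that pins $\overline{\nabla} f$ to a single eigendirection of $Ric$ and thereby delivers the identity $\overline{\nabla}\psi=-c\,\overline{\nabla} f$.
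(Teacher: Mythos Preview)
Your proof is correct, but it follows a genuinely different route from the paper's.

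The paper's argument sets $Y:=\overline{\nabla}\psi+c\,\overline{\nabla} f$ (with $\psi=\overline{\Delta} f/n$) and expands $\int_M Ric(Y,Y)\,dM$. Using the pointwise identity $Ric(\overline{\nabla}(\overline{\Delta} f),\overline{\nabla} f)=-\tfrac{n-1}{n}|\overline{\nabla}(\overline{\Delta} f)|^2$, the Rayleigh inequality in the form $\int(\overline{\Delta} f)^2\le\lambda_1^{-1}\int|\overline{\nabla}(\overline{\Delta} f)|^2$, and the Ricci upper bound evaluated on $\overline{\nabla}(\overline{\Delta} f)$, the paper obtains $\int_M Ric(Y,Y)\,dM\le 0$; strict positivity of $Ric$ then forces $Y\equiv 0$. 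From $\overline{\nabla}\psi=-c\,\overline{\nabla} f$ the paper passes to the Obata equation for $\overline{\Delta} f$ and invokes Reilly's 1980 theorem (Theorem~B in \cite{Reilly2}) to get a geodesic ball in $\mathbb{S}^n(c)$, the boundary condition $\overline{\Delta} f|_{\partial M}=0$ then identifying $\partial M$ as the equator.

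By contrast, you exploit the operator inequality $Ric^2\le (n-1)K\,Ric$ to bound $\int|\overline{\nabla}\psi|^2$ from above, pair it with the Rayleigh lower bound, and extract $(\lambda_1-nc)^2\le 0$; the equality analysis (using $Ric>0$ to pin $\overline{\nabla} f$ to the $(n-1)c$--eigenspace of $Ric$) then recovers $\overline{\nabla}\psi=-c\,\overline{\nabla} f$. For the rigidity step you do not cite \cite{Reilly2}: you translate to $\tilde f=f-a/c$, prove directly that $\partial M$ is totally geodesic via the boundary Hessian decomposition and the ODE uniqueness argument showing $\partial_\nu\tilde f$ is nowhere zero, and then apply the 1977 Reilly theorem \cite{reilly}. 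Your route is slightly longer at the end but is more self-contained and makes the equality $\lambda_1=nc$ explicit; the paper's $Y$-trick is quicker and reaches the key identity $\overline{\nabla}\psi=-c\,\overline{\nabla} f$ in a single stroke, at the cost of quoting the stronger rigidity result from \cite{Reilly2}.
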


In the sequel, motivated by \cite{deshmuk}, we shall consider the Einstein case with the existence of a nonzero gradient conformal vector field. More precisely, we have established the following result.

\begin{theorem}\label{thE}
	Let $(M^n,g)$ be a compact connected Einstein manifold with nonempty boundary having the Einstein constant $\lambda =(n-1)c$ and $\overline{\nabla} f$ be a nonzero conformal vector field on $M$. Suppose that $\overline{\Delta} f$ is nonconstant and $\overline{\Delta} f=0$ on $\partial M$ . Then $c>0$ and $M$ is isometric to a hemisphere $\mathbb{S}^n_+(c)$.
\end{theorem}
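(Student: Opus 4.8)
The plan is to exploit the conformal gradient condition together with the Einstein hypothesis to force the potential function to satisfy an Obata-type equation, and then invoke the boundary version of Obata's theorem (Reilly \cite{reilly}).

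Let me set up the key identities. From \eqref{eqconf2} with $\xi = \overline{\nabla} f$, closedness of $\xi$ gives $\varphi = 0$, so $\overline{\nabla}^2 f = \rho g$ where $\rho$ is the conformal factor (here $\rho = \overline{\Delta} f / n$). Thus $\overline{\Delta} f = n\rho$. Tracing Bochner-type/commutation identities: differentiating $\overline{\nabla}^2 f = \rho g$ and using the standard Ricci identity $\overline{\nabla}_X \overline{\nabla}^2 f(Y,Z) - \overline{\nabla}_Y \overline{\nabla}^2 f(X,Z) = -R(X,Y)\overline{\nabla} f$ (curvature convention), one obtains the classical fact that $\mathrm{Ric}(\overline{\nabla}f, X) = -(n-1)\, d\rho(X)$, equivalently $\overline{\nabla}\overline{\Delta} f = -n\,\mathrm{Ric}(\overline{\nabla} f,\cdot)$ up to normalization. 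Now feed in the Einstein condition $\mathrm{Ric} = (n-1)c\, g$: this yields $(n-1)c\, \overline{\nabla} f = -(n-1)\,\overline{\nabla}\rho$, hence $\overline{\nabla}\rho = -c\,\overline{\nabla} f$, so $\rho = -cf + a$ for a constant $a$. Therefore
\[
\overline{\nabla}^2 f = (-cf + a)\, g .
\]
Replacing $f$ by $f - a/c$ (still a conformal gradient potential, with the hypothesis $\overline{\Delta} f$ nonconstant preserved since translating $f$ does not change $\overline{\Delta} f$), we may assume
\[
\overline{\nabla}^2 f = -c f\, g .
\]

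Next I would pin down the sign of $c$ and the boundary behavior. The hypothesis that $\overline{\Delta} f = n\rho = -ncf$ is nonconstant forces $f$ nonconstant and $c \neq 0$. To see $c>0$: integrate $\overline{\Delta} f = -ncf$ over $M$; by the divergence theorem $\int_M \overline{\Delta} f\, dM = \int_{\partial M} g(\overline{\nabla} f, \nu)\, d\sigma$. Combined with the boundary condition $\overline{\Delta} f = 0$ on $\partial M$, i.e. $-ncf = 0$ on $\partial M$, so $f = 0$ on $\partial M$ (as $c\neq 0$), we get $\int_M f\, dM \cdot (-nc) = \int_{\partial M} g(\overline{\nabla}f,\nu)\,d\sigma$. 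A cleaner route: multiply $\overline{\nabla}^2 f = -cf g$ by $f$, trace and integrate, using $f|_{\partial M}=0$ to kill the boundary term in Green's identity, to obtain $\int_M |\overline{\nabla} f|^2\, dM = nc \int_M f^2\, dM$; since $f$ is nonconstant both integrals are positive, hence $c>0$. Moreover $f|_{\partial M} = 0$ together with $\overline{\nabla}^2 f = -cf g$ means that along $\partial M$ the full Hessian vanishes, which (restricting to tangent directions) forces the boundary to be totally geodesic — this is exactly Reilly's hypothesis. We may also arrange $f \geq 0$ on $M$ after possibly replacing $f$ by $-f$ and restricting to a nodal domain, or argue directly that connectedness plus the structure of solutions of $\overline{\nabla}^2 f=-cfg$ gives the sign; in any case Reilly's theorem \cite{reilly} then applies and yields that $M$ is isometric to a hemisphere $\mathbb{S}^n_+(c)$.

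The main obstacle I anticipate is the careful handling of the boundary term and the sign/normalization of $f$: one must verify that the Dirichlet condition $\overline{\Delta} f = 0$ on $\partial M$ genuinely translates (after the harmless translation $f \mapsto f - a/c$) into $f = 0$ on $\partial M$, and that this in turn gives totally geodesic boundary, so that Reilly's rigidity result is literally applicable; there is also a subtlety in justifying $f\geq 0$ (or reducing to a component where Reilly applies) which requires knowing that the zero set of $f$ is precisely $\partial M$. The derivation of $\overline{\nabla}\rho = -c\,\overline{\nabla} f$ from the Einstein condition is routine but should be done with a fixed curvature convention to get the constant right; everything else is integration by parts on a manifold with boundary.
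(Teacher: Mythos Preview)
Your approach is essentially the paper's---derive an Obata-type equation from the Einstein condition plus the conformal gradient identity, then invoke a Reilly rigidity result---but the paper's execution eliminates precisely the two obstacles you flag at the end. First, instead of translating $f$ by $a/c$, the paper works directly with $u:=\overline{\Delta} f$: combining $\mathrm{Ric}(\overline{\nabla} f)=(n-1)c\,\overline{\nabla} f$ with \eqref{ricnablaf} gives $\tfrac{1}{n}\overline{\nabla}(\overline{\Delta} f)=-c\,\overline{\nabla} f$, hence $\overline{\Delta} u=-nc\,u$ with $u|_{\partial M}=0$ already by hypothesis (no constant $a$, no division by $c$ before its sign is known); since $u$ is nonconstant this makes $nc$ a Dirichlet eigenvalue and yields $c>0$ immediately. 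Second, and this is the key point, the paper then notes $\overline{\nabla}_X\overline{\nabla} u=-c\,u\,X$ and appeals to Theorem~B of Reilly \cite{Reilly2} rather than \cite{reilly}: that theorem only requires $u|_{\partial M}$ to be constant and concludes that $M$ is a geodesic ball in $\mathbb{S}^n(c)$, after which $u|_{\partial M}=0$ forces the ball's boundary to be the equator. This sidesteps entirely your concerns about verifying $f\ge 0$ and deriving that $\partial M$ is totally geodesic, which are genuine loose ends if you insist on \cite{reilly}; your ``restrict to a nodal domain'' suggestion in particular would change $M$ and is not a valid reduction.
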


On the other hand, on a compact Riemannian manifold $(M^n,g)$, the energy of a smooth vector field $X\in \mathfrak{X}(M)$ is defined by \begin{equation}\label{defenergia}
E(X)=\frac{1}{2}\int_M |X|^2.
\end{equation}

Furthermore, in the sphere $\mathbb{S}^n(c)$ of constant curvature $\frac{1}{\sqrt{c}}$, any height function $h:\mathbb{S}^n(c) \to \mathbb{R}$, $h(x)=\langle x,v\rangle_{\mathbb{R}^{n+1}}$, with respect to a constant vector  $v\in \mathbb{R}^{n+1}$ satisfies $\overline{\nabla}_X\overline{\nabla} h=-\sqrt{c}\,hX$.  Hence, the conformal vector $\xi=\overline{\nabla} h$ and the function $f=-\sqrt{c}\,h$ satisfy $E(\xi)=c^{-2}E(\overline{\nabla} f)$. Whence, we can ask when a compact Riemannian manifold $(M^n,g)$ that admits a nontrivial conformal vector field $\xi$ with potential function $f$ satisfying $E(\xi)=c^{-2}E(\overline{\nabla} f)$, for a positive constant $c$, is  isometric to $\mathbb{S}^n(c)$? There is an affirmative answer to this question for compact Riemannian manifolds of constant scalar curvature given by Deshmukh \cite{Deshmukh}.

Since  Deshmukh \cite{Deshmukh} only deal with manifold without boundary   trying to characterize the sphere, we can think about the same question  stated for manifolds   with smooth boundary. More precisely,
\begin{question} \label{q2}
	Under which conditions a compact Riemannian manifold $(M^n,g)$ with smooth boundary $\partial M$ that admits a nontrivial conformal vector field $\xi$ with potential function $f$ satisfying $E(\xi)=c^{-2}E(\overline{\nabla}f)$, for a positive constant $c$, is isometric to a hemisphere  $\mathbb{S}_{+}^n(c)$?
\end{question}

An affirmative  answer to Question \ref{q2} is presented in the following result:

\begin{theorem} \label{thEnergy}
	Let $(M^n,g)$ be a smooth compact Riemannian manifold with smooth totally geodesic boundary $\partial M$ and $\xi$ a smooth conformal vector field on $M$ with nonconstant  potential function  satisfying $f=0$ on $\partial M$. Suppose that $M$ has constant scalar curvature $R=n(n-1)c$. Then $c>0$ and
	\begin{equation}\label{eqthmenergy}
	E(\xi)\geq c^{-2}E(\overline{\nabla} f).
	\end{equation}
	Moreover, equality holds if and only if $M$ is isometric to a hemisphere $\mathbb{S}^n_+(c)$.
\end{theorem}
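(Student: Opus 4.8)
The plan is to translate the hypotheses into two integral identities and finish with the Cauchy--Schwarz inequality, invoking the Reilly-type rigidity results recalled in the introduction only for the equality case. First I would determine the equation satisfied by $f$. Tracing \eqref{eqderxitens} gives $\operatorname{div}\xi=\operatorname{tr}(\overline{\nabla}\xi)=nf$, and it is classical that a conformal field with potential $f$ obeys $\xi(R)=-2fR-2(n-1)\overline{\Delta} f$ (the infinitesimal form of the conformal transformation law of the scalar curvature, applied to $\mathcal{L}_\xi g=2fg$). Since $R=n(n-1)c$ is constant, this forces
\[ \overline{\Delta} f=-\tfrac{R}{n-1}\,f=-nc\,f\qquad\text{on } M. \]
As $f$ is nonconstant with $f|_{\partial M}=0$, it is a nontrivial Dirichlet eigenfunction of $-\overline{\Delta}$ with eigenvalue $nc$; since all Dirichlet eigenvalues are strictly positive, $nc>0$ and hence $c>0$.

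Next I would record two integral identities, each using only $f|_{\partial M}=0$. Green's formula gives $\int_M|\overline{\nabla} f|^2=-\int_M f\,\overline{\Delta} f=nc\int_M f^2$, so $c^{-2}E(\overline{\nabla} f)=\tfrac{n}{2c}\int_M f^2$. And $\operatorname{div}(f\xi)=\langle\overline{\nabla} f,\xi\rangle+f\operatorname{div}\xi=\langle\overline{\nabla} f,\xi\rangle+nf^2$, so integrating and discarding the boundary term gives $\int_M\langle\overline{\nabla} f,\xi\rangle=-n\int_M f^2$. Cauchy--Schwarz applied to the latter, together with the former, yields
\[ n\int_M f^2=\Big|\int_M\langle\overline{\nabla} f,\xi\rangle\Big|\le\Big(nc\int_M f^2\Big)^{1/2}\Big(\int_M|\xi|^2\Big)^{1/2}; \]
since $f\not\equiv0$, dividing by $(\int_M f^2)^{1/2}$ and squaring gives $n\int_M f^2\le c\int_M|\xi|^2$, i.e.\ $E(\xi)\ge\tfrac{n}{2c}\int_M f^2=c^{-2}E(\overline{\nabla} f)$, which is \eqref{eqthmenergy}.

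For the equality case, equality in \eqref{eqthmenergy} forces equality in the Cauchy--Schwarz step, so $\xi=t\,\overline{\nabla} f$ for a constant $t$; moreover $t\int_M|\overline{\nabla} f|^2=\int_M\langle\xi,\overline{\nabla} f\rangle=-n\int_M f^2<0$ forces $t<0$. Then $\overline{\nabla}\xi=t\,\overline{\nabla}^2 f$ is symmetric, so $\varphi=0$ in \eqref{eqderxitens} and $\overline{\nabla}^2 f=\tfrac{1}{t} fg$; tracing and comparing with $\overline{\Delta} f=-ncf$ gives $\tfrac{1}{t}=-c$, that is $\overline{\nabla}^2 f=-cfg$ with $c>0$, $f$ nonconstant, $f|_{\partial M}=0$ and $\partial M$ totally geodesic. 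The Reilly-type theorems quoted in the introduction (\cite{reilly}, \cite{Reilly2}) then force $M$ to be isometric to $\mathbb{S}^n_+(c)$ --- indeed a geodesic ball of $\mathbb{S}^n(c)$ whose boundary is totally geodesic is a hemisphere. Conversely, on $\mathbb{S}^n_+(c)$ the gradient conformal field $\xi=\overline{\nabla} h$ of a height function $h$ vanishing on the equatorial boundary satisfies $\overline{\nabla}^2 h=-chg$, so $\varphi=0$, $f=-ch$, $\overline{\nabla} f=-c\xi$, whence $E(\overline{\nabla} f)=c^2E(\xi)$ and equality holds. I expect the one delicate point to be this last rigidity conclusion --- extracting ``isometric to a hemisphere'' from the Obata-type equation $\overline{\nabla}^2 f=-cfg$ with a totally geodesic Dirichlet boundary --- everything else being manipulation of divergence identities.
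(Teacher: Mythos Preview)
Your proof is correct and more direct than the paper's. The key difference is how you obtain the identity $\int_M|\overline{\nabla} f|^2=nc\int_M f^2$. You derive the pointwise equation $\overline{\Delta} f=-ncf$ from the infinitesimal conformal transformation law of the scalar curvature (constancy of $R$ kills $\xi(R)$) and then integrate by parts using only $f|_{\partial M}=0$. The paper instead proceeds via Ricci--Bochner identities: it integrates $\operatorname{div}\big(\overline{\nabla}^2 f(\xi)-\overline{\Delta} f\,\xi\big)$, invokes Lemma~\ref{lemmarici} to rewrite $\int_M Ric(\xi,\overline{\nabla} f)$, and must then show that the boundary integral $\int_{\partial M}\big(\overline{\nabla}^2 f(\xi,\nu)-g(\xi,\nu)\overline{\Delta} f\big)\,d\sigma$ vanishes via the Gauss--Weingarten equations and Lemma~\ref{lemmadiv}; this is where the totally geodesic hypothesis is used. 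For the inequality step you use Cauchy--Schwarz on $\int_M\langle\overline{\nabla} f,\xi\rangle$, while the paper expands $\int_M|\overline{\nabla} f+c\xi|^2\ge 0$; these are equivalent, but the paper's version delivers $\overline{\nabla} f=-c\xi$ at equality with the constant already determined, whereas you recover $t=-1/c$ a posteriori. A pleasant byproduct of your route is that $c>0$ and \eqref{eqthmenergy} follow from $R$ constant and $f|_{\partial M}=0$ alone; the totally geodesic hypothesis is needed only in the rigidity step to upgrade the geodesic ball from Reilly's theorem to a hemisphere.
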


\section{Preliminaries}

In this section we prove some basic  results which will be useful for our proofs. First we recall the  tensorial Ricci-Bochner formula, which reads
\begin{equation}\label{bochT}
div(\overline{\nabla}^2 f)=Ric(\overline{\nabla}f)+\overline{\nabla} (\overline{\Delta} f).
\end{equation}
In particular, if $\overline{\nabla} f$ is a conformal vector field, then we have
\begin{equation}\label{hessianablaf}
\overline{\nabla}^2 f = \dfrac{\overline{\Delta} f}{n}g,
\end{equation} and consequently, we get
\begin{equation}
\label{ricnablaf}
Ric(\overline{\nabla} f)=-\big(\dfrac{n-1}{n}\big)\overline{\nabla}(\overline{\Delta} f).
\end{equation}
Now we will present four lemmas that will be used in our proofs. 
\begin{lemma} \label{lrb}
	Let $(M^n,g)$ be a compact connected Riemannian manifold with smooth boundary $\partial M$ and  $\overline{\nabla}f$ be a conformal vector field on $M$. Then,
	\begin{itemize}
		\item[(i)] $\displaystyle\int_{M}Ric(\overline{\nabla}f,\overline{\nabla}f)dM=\dfrac{(n-1)}{n}\displaystyle\int_{M}(\overline{\Delta} f)^2dM-\dfrac{(n-1)}{n}\displaystyle\int_{\partial M}(\overline{\Delta} f)g(  \overline{\nabla}f,\nu ) d\sigma$.
		\item[(ii)] $\displaystyle\int_{M}\big(Ric(\overline{\nabla}(\overline{\Delta} f),\overline{\nabla} f)+\dfrac{n-1}{n}\mid\overline{\nabla}(\overline{\Delta} f)\mid^2\big)dM=0$.
		\item[(iii)]$\dfrac{n-2}{2(n-1)}g( \overline{\nabla}R,\overline{\nabla}f ) - \dfrac{1}{n-1} div(f\overline{\nabla} R) + \overline{\Delta}\big( \overline{\Delta} f + \dfrac{R}{n-1}f\big)=0$.
	\end{itemize}
	In particular, if $R$ is constant, then
	\begin{eqnarray}
	\overline{\Delta}\Big(\overline{\Delta} f+\dfrac{R}{n-1}f  \Big)&=&0. \label{eqMain}
	\end{eqnarray}
	
\end{lemma}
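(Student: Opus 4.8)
The plan is to obtain all three items from the two structural identities that a conformal gradient field forces, namely $\overline{\nabla}^2 f=\tfrac{\overline{\Delta}f}{n}g$ (equation \eqref{hessianablaf}) and $Ric(\overline{\nabla}f)=-\tfrac{n-1}{n}\overline{\nabla}(\overline{\Delta}f)$ (equation \eqref{ricnablaf}), combined with the divergence theorem on $M$ and the contracted second Bianchi identity $div\,Ric=\tfrac12\overline{\nabla}R$.

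For (i), pair \eqref{ricnablaf} with $\overline{\nabla}f$ to get the pointwise identity $Ric(\overline{\nabla}f,\overline{\nabla}f)=-\tfrac{n-1}{n}g(\overline{\nabla}(\overline{\Delta}f),\overline{\nabla}f)$, rewrite $g(\overline{\nabla}(\overline{\Delta}f),\overline{\nabla}f)=div\big((\overline{\Delta}f)\overline{\nabla}f\big)-(\overline{\Delta}f)^2$, integrate over $M$, and apply the divergence theorem; the resulting boundary integrand is $(\overline{\Delta}f)\,g(\overline{\nabla}f,\nu)$, which is exactly the term in the statement. For (ii), pair \eqref{ricnablaf} instead with $\overline{\nabla}(\overline{\Delta}f)$ and use the symmetry of $Ric$: this already gives the pointwise identity $Ric(\overline{\nabla}(\overline{\Delta}f),\overline{\nabla}f)+\tfrac{n-1}{n}|\overline{\nabla}(\overline{\Delta}f)|^2=0$, so the integrand vanishes identically and (ii) follows at once (alternatively one can integrate $Ric(\overline{\nabla}(\overline{\Delta}f),\overline{\nabla}f)$ by parts using $div\big(Ric(\overline{\nabla}f)\big)=-\tfrac{n-1}{n}\overline{\Delta}(\overline{\Delta}f)$, and the boundary terms cancel).

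For (iii), the idea is to compute $div\big(Ric(\overline{\nabla}f)\big)$ in two ways. By the product rule for the divergence of the vector field $Ric(\overline{\nabla}f)$ one has $div\big(Ric(\overline{\nabla}f)\big)=(div\,Ric)(\overline{\nabla}f)+\langle Ric,\overline{\nabla}^2f\rangle$, where $\langle Ric,\overline{\nabla}^2f\rangle$ denotes the full contraction $R^{ij}\nabla_i\nabla_j f$; the contracted Bianchi identity turns the first term into $\tfrac12 g(\overline{\nabla}R,\overline{\nabla}f)$, while \eqref{hessianablaf} turns the second into $\tfrac{R\,\overline{\Delta}f}{n}$. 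On the other hand, taking the divergence of \eqref{ricnablaf} directly gives $div\big(Ric(\overline{\nabla}f)\big)=-\tfrac{n-1}{n}\overline{\Delta}(\overline{\Delta}f)$. Equating the two yields $-\tfrac{n-1}{n}\overline{\Delta}(\overline{\Delta}f)=\tfrac12 g(\overline{\nabla}R,\overline{\nabla}f)+\tfrac{R\,\overline{\Delta}f}{n}$. Then I would expand $\overline{\Delta}\big(\overline{\Delta}f+\tfrac{R}{n-1}f\big)=\overline{\Delta}(\overline{\Delta}f)+\tfrac1{n-1}\overline{\Delta}(Rf)$, insert $\overline{\Delta}(Rf)=R\,\overline{\Delta}f+2g(\overline{\nabla}R,\overline{\nabla}f)+f\,\overline{\Delta}R$ and $f\,\overline{\Delta}R=div(f\overline{\nabla}R)-g(\overline{\nabla}f,\overline{\nabla}R)$, and substitute the value of $\overline{\Delta}(\overline{\Delta}f)$ just obtained; collecting terms (the coefficient of $g(\overline{\nabla}R,\overline{\nabla}f)$ works out to $-\tfrac{n-2}{2(n-1)}$ once the $\tfrac1{n-1}div(f\overline{\nabla}R)$ piece is split off) produces exactly identity (iii). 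When $R$ is constant, $\overline{\nabla}R=0$ and $div(f\overline{\nabla}R)=0$, so (iii) collapses to $\overline{\Delta}\big(\overline{\Delta}f+\tfrac{R}{n-1}f\big)=0$, which is \eqref{eqMain}.

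The only delicate part is the bookkeeping in (iii): tracking coefficients and signs through the double Laplacian and the two divergence identities, and correctly invoking the contracted Bianchi identity for $div\,Ric$. Items (i) and (ii) are short once \eqref{ricnablaf} is in hand, and the constant scalar curvature case is a one-line specialization of (iii).
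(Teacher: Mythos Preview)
Your proposal is correct and follows essentially the same route as the paper: items (i) and (ii) come directly from pairing \eqref{ricnablaf} with $\overline{\nabla}f$ and $\overline{\nabla}(\overline{\Delta}f)$ respectively (the paper also notes that (ii) is in fact a pointwise identity), and item (iii) comes from taking the divergence of \eqref{ricnablaf} and using the contracted Bianchi identity together with \eqref{hessianablaf}. The only cosmetic difference is that the paper stops at the intermediate identity $\tfrac{n}{2(n-1)}g(\overline{\nabla}R,\overline{\nabla}f)+\overline{\Delta}(\overline{\Delta}f)+\tfrac{R}{n-1}\overline{\Delta}f=0$ and asserts it is equivalent to (iii), whereas you carry out the bookkeeping explicitly via $\overline{\Delta}(Rf)$ and $div(f\overline{\nabla}R)$ to land on the stated form.
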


\begin{proof}
	Using \eqref{ricnablaf} we have
	$$Ric(\overline{\nabla}f,\overline{\nabla}f)=-(n-1)g( \overline{\nabla}\big(\dfrac{\overline{\Delta} f}{n}\big),\overline{\nabla}f )=-\dfrac{n-1}{n}\overline{\nabla}f(\overline{\Delta} f),$$
	as well as
	\begin{equation}\label{eqref1}
	div\big( \dfrac{\overline{\Delta }f}{n} \overline{\nabla}f\big)=-\dfrac{1}{n-1}Ric(\overline{\nabla}f,\overline{\nabla}f)+\dfrac{1}{n}(\overline{\Delta }f)^2.
	\end{equation}
	Therefore, integrating \eqref{eqref1} and using the divergence theorem we obtain
	$$\int_{M}Ric(\overline{\nabla}f,\overline{\nabla}f)dM=\dfrac{(n-1)}{n}\int_{M}(\overline{\Delta} f)^2dM-\dfrac{(n-1)}{n}\int_{\partial M}(\overline{\Delta} f)g(  \overline{\nabla}f,\nu ) d\sigma,$$
	which gives the first assertion. Proceeding we have
	\begin{equation}\label{eqric2}
	Ric\big(\overline{\nabla} \big(\dfrac{\overline{\Delta} f}{n}\big),\overline{\nabla} f\big)=-\dfrac{n-1}{n^2}\mid \overline{\nabla} (\overline{\Delta} f)\mid^2.
	\end{equation}
	Integrating \eqref{eqric2} we get
	$$\displaystyle\int_{M}\big(Ric(\overline{\nabla}(\overline{\Delta} f),\overline{\nabla} f)+\dfrac{n-1}{n}\mid\overline{\nabla}(\overline{\Delta }f)\mid^2\big)dM=0,$$
	which finishes the second item of the lemma. Taking divergence of equation \eqref{ricnablaf} and using (\ref{hessianablaf}), we obtain
	\begin{eqnarray}
	\dfrac{n}{2(n-1)}g( \overline{\nabla} R,\overline{\nabla} f ) + \overline{\Delta}(\overline{\Delta }f)+\dfrac{R}{n-1} \overline{\Delta }f &=&0, \label{eqM}
	\end{eqnarray}
	which establishes the last assertion and finishes the proof of the lemma.
\end{proof}

\begin{lemma} \label{alt}
	Let $(M^n,g)$ be a compact Riemannian manifold with smooth boundary and constant scalar curvature $R$. Let $f:M \to \mathbb{R}$ be a smooth function such that $\overline{\nabla} f$ is a nonzero conformal vector field on $M$ and $\overline{\Delta} f=0$ on $\partial M$. Then:
	\begin{enumerate}
		\item[(i)] $\displaystyle\int_{M} (\overline{\Delta }f)^2 dM = -\displaystyle\int_{M} g( \overline{\nabla}(\overline{\Delta} f),\overline{\nabla}f ) dM=\dfrac{n}{n-1}\displaystyle\int_{M} Ric(\overline{\nabla}f,\overline{\nabla}f)dM.$\\
		\item [(ii)]$\dfrac{n}{n-1} \displaystyle\int_{M} Ric(\overline{\nabla} f,\overline{\nabla}f) dM + \dfrac{R}{n-1}\displaystyle\int_{M} f \overline{\Delta} f\, dM =-\displaystyle\int_{\partial M}f g( \overline{\nabla} (\overline{\Delta} f), \nu) d\sigma$.\\
		\item[(iii)] $\displaystyle\int_{M} \overline{\Delta} |\overline{\nabla}f|^2 dM = 0$. In particular, $\displaystyle\int_{\partial M}g( \overline{\nabla}|\overline{\nabla}f|^2,\nu ) d\sigma = 0$, where $\nu$ is a  unit normal field exterior to $\partial M$ in $M$. Moreover, $\overline{\nabla}|\overline{\nabla}f|^2=0$ on $\partial M$.
	\end{enumerate}
\end{lemma}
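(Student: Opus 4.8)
The plan is to derive all three identities from the two structural facts available for a conformal gradient field, namely $\overline{\nabla}^2 f = \frac{\overline{\Delta} f}{n}g$ from \eqref{hessianablaf} and $Ric(\overline{\nabla} f) = -\frac{n-1}{n}\overline{\nabla}(\overline{\Delta} f)$ from \eqref{ricnablaf}, together with Lemma \ref{lrb} and repeated applications of the divergence theorem, always exploiting that the Dirichlet-type condition $\overline{\Delta} f = 0$ on $\partial M$ kills the relevant boundary integrals.

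For (i), I would start from Lemma \ref{lrb}(i): the boundary term appearing there is $\frac{n-1}{n}\int_{\partial M}(\overline{\Delta} f)\,g(\overline{\nabla} f,\nu)\,d\sigma$, which vanishes since $\overline{\Delta} f=0$ on $\partial M$, leaving $\int_M Ric(\overline{\nabla} f,\overline{\nabla} f)\,dM = \frac{n-1}{n}\int_M(\overline{\Delta} f)^2\,dM$, that is, the last equality of (i). For the middle equality, I would integrate the pointwise identity $div\big((\overline{\Delta} f)\overline{\nabla} f\big) = g(\overline{\nabla}(\overline{\Delta} f),\overline{\nabla} f) + (\overline{\Delta} f)^2$ over $M$; the divergence theorem converts the left-hand side into $\int_{\partial M}(\overline{\Delta} f)\,g(\overline{\nabla} f,\nu)\,d\sigma = 0$, which yields $\int_M g(\overline{\nabla}(\overline{\Delta} f),\overline{\nabla} f)\,dM = -\int_M(\overline{\Delta} f)^2\,dM$.

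For (ii), I would rewrite $\frac{n}{n-1}Ric(\overline{\nabla} f,\overline{\nabla} f) = -g(\overline{\nabla}(\overline{\Delta} f),\overline{\nabla} f)$ using \eqref{ricnablaf}, and then apply the divergence theorem to $div\big(f\,\overline{\nabla}(\overline{\Delta} f)\big) = g(\overline{\nabla} f,\overline{\nabla}(\overline{\Delta} f)) + f\,\overline{\Delta}(\overline{\Delta} f)$. Since $R$ is constant, the identity \eqref{eqMain} reads $\overline{\Delta}(\overline{\Delta} f) = -\frac{R}{n-1}\overline{\Delta} f$; substituting this and collecting terms produces precisely the asserted identity, with the boundary integral $\int_{\partial M}f\,g(\overline{\nabla}(\overline{\Delta} f),\nu)\,d\sigma$ on the right --- this one does not drop out, since we control only $\overline{\Delta} f$, not $f$, on $\partial M$.

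For (iii), I would compute directly: for any vector field $X$, $X|\overline{\nabla} f|^2 = 2g(\overline{\nabla}_X\overline{\nabla} f,\overline{\nabla} f) = 2\overline{\nabla}^2 f(X,\overline{\nabla} f) = \frac{2\overline{\Delta} f}{n}\,g(X,\overline{\nabla} f)$ by \eqref{hessianablaf}, hence $\overline{\nabla}|\overline{\nabla} f|^2 = \frac{2\overline{\Delta} f}{n}\overline{\nabla} f$. As $\overline{\Delta} f=0$ on $\partial M$, this shows $\overline{\nabla}|\overline{\nabla} f|^2 = 0$ on $\partial M$, which is the final claim; consequently $\int_{\partial M}g(\overline{\nabla}|\overline{\nabla} f|^2,\nu)\,d\sigma = 0$, and by the divergence theorem this integral equals $\int_M\overline{\Delta}|\overline{\nabla} f|^2\,dM$, giving the remaining assertions. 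None of these steps is a genuine obstacle; the only care required is bookkeeping of signs and keeping track of which boundary terms survive, the one point of real interest being that the boundary term in (ii) is retained rather than discarded.
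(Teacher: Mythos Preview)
Your proof is correct and, for parts (i) and (ii), matches the paper's argument essentially line for line: the same divergences $div\big((\overline{\Delta} f)\overline{\nabla} f\big)$ and $div\big(f\,\overline{\nabla}(\overline{\Delta} f)\big)$ are used, together with \eqref{ricnablaf} and \eqref{eqMain}, and the boundary condition $\overline{\Delta} f|_{\partial M}=0$ is invoked at the same points.

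For part (iii) your route is slightly different and in fact more economical. The paper first integrates the Bochner formula $\tfrac12\overline{\Delta}|\overline{\nabla} f|^2 = Ric(\overline{\nabla} f,\overline{\nabla} f) + |\overline{\nabla}^2 f|^2 + g(\overline{\nabla} f,\overline{\nabla}(\overline{\Delta} f))$, uses part (i) and $|\overline{\nabla}^2 f|^2 = \tfrac{1}{n}(\overline{\Delta} f)^2$ to see the right-hand side integrates to zero, and only afterwards computes the pointwise identity $\tfrac12\overline{\nabla}|\overline{\nabla} f|^2 = \tfrac{\overline{\Delta} f}{n}\overline{\nabla} f$ to get the final claim. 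You instead establish this pointwise identity first and derive all three assertions directly from it, bypassing Bochner entirely. Both arguments are valid; yours is shorter and shows that the integral statements in (iii) are in fact consequences of the stronger pointwise vanishing, whereas the paper's ordering obscures this logical dependence.
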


\begin{proof}
	Since $div(\overline{\Delta} f \overline{\nabla} f)=(\overline{\Delta} f)^2+ g( \overline{\nabla}(\overline{\Delta } f),\overline{\nabla} f )$, we  have
	\begin{eqnarray*}
		\displaystyle\int_{M} (\overline{\Delta} f)^2 dM &=& -\displaystyle\int_{M} g( \overline{\nabla}(\overline{\Delta} f),\overline{\nabla} f ) dM,
	\end{eqnarray*}
	and using \eqref{ricnablaf}, we deduce
	$$ \displaystyle\int_{M} (\overline{\Delta} f)^2 dM = \dfrac{n}{n-1}\displaystyle\int_{M} Ric(\overline{\nabla} f,\overline{\nabla} f)dM,$$ which finishes the first statement. Now note that $div(f \overline{\nabla}(\overline{\Delta } f))=f\overline{\Delta}(\overline{\Delta} f)+g( \overline{\nabla} f,\overline{\nabla}(\overline{\Delta} f))$, then by
	\eqref{eqMain}  we get
	\begin{eqnarray*}
		\dfrac{-n}{n-1}\displaystyle\int_{M} Ric(\overline{\nabla} f,\overline{\nabla} f) dM&=&\displaystyle\int_{\partial M} g( f\overline{\nabla}(\overline{\Delta} f), \nu ) d\sigma+\dfrac{R}{n-1}\displaystyle\int_{M} f\overline{\Delta} f dM,
	\end{eqnarray*}
	which establishes the second item.
	
	On the other hand, we integrate Bochner's formula to obtain
	\begin{eqnarray*}
		\dfrac{1}{2}\displaystyle\int_{M} \overline{\Delta} |\overline{\nabla} f|^2 dM&=& \displaystyle\int_{M} Ric(\overline{\nabla} f, \overline{\nabla} f) dM + \displaystyle\int_{M} |\overline{\nabla}^2 f|^2 dM + \displaystyle\int_{M} g( \overline{\nabla} f,\overline{\nabla}(\overline{\Delta} f) ) dM\\
		&=& \dfrac{n-1}{n}\displaystyle\int_{M} (\overline{\Delta }f)^2 dM + \dfrac{1}{n} \displaystyle\int_{M} (\overline{\Delta} f)^2 dM - \displaystyle\int_{M} (\overline{\Delta} f)^2 dM\\
		&=&0,
	\end{eqnarray*}
	which yields
	$$
	0=\displaystyle\int_{M} \overline{\Delta} |\overline{\nabla}f|^2 dM
	=\displaystyle\int_{\partial M} g( \overline{\nabla}|\overline{\nabla} f|^2,\nu ) d\sigma.
	$$
	Since $\dfrac{1}{2}\overline{\nabla} |\overline{\nabla}f|^2 =\overline{\nabla}_{\overline{\nabla}f}\overline{\nabla}f$, we use \eqref{hessianablaf}  to get $\dfrac{1}{2}\overline{\nabla} |\overline{\nabla}f|^2 = \dfrac{\overline{\Delta} f}{n}(\overline{\nabla} f)=0$  on $\partial M,$
	which concludes the proof of the lemma.
\end{proof}

\begin{lemma}\label{lemmadiv}Let $(M^n,g)$ be a smooth compact Riemannian manifold with smooth totally geodesic boundary $\partial M$ and $\xi$ a smooth conformal vector field on $M$ with potential function $f$ satisfying $f|_{\partial M}=0$. Denote by $div$ and $div_{\partial M}$ the divergence operators on $M$ and $\partial M$, respectively, and
	by $\xi^T$ the tangential part of $\xi$ on $\partial M$. Then,
	\begin{equation}\label{eqdiv}
	div(\xi)=nf,\,\,\,\,\,\,\,\ div_{\partial M}(\xi^T)=(n-1)f.
	\end{equation}
	Furthermore,
	\begin{equation}\label{eqint}
	\int_M g(\overline{\nabla} f,\xi) dM=-n\int_M f^2 dM.
	\end{equation}
\end{lemma}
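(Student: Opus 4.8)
The plan is to derive all three identities directly from the structure equation \eqref{eqderxitens} together with the divergence theorem, with no heavy machinery required. First I would record that $\mathrm{div}(\xi)=nf$ on all of $M$: taking the metric trace of \eqref{eqderxitens} gives $\mathrm{div}(\xi)=\mathrm{tr}_g(\overline{\nabla}\xi)=\mathrm{tr}_g(fg)+\mathrm{tr}_g(\varphi)=nf$, since $\varphi$ is skew-symmetric and hence trace-free. This is the first half of \eqref{eqdiv} and it will also feed into the proof of \eqref{eqint}.

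Next I would prove $\mathrm{div}_{\partial M}(\xi^T)=(n-1)f$ by relating the intrinsic divergence on $\partial M$ to the ambient one. Note that even though $f$ vanishes on $\partial M$, the vector field $\xi$ need not be normal there (it is not assumed to be a gradient field), so $\xi^T$ genuinely enters. Fix $p\in\partial M$, choose a local orthonormal frame $\{e_1,\dots,e_{n-1}\}$ of $\partial M$ near $p$ and put $e_n=\nu$; on $\partial M$ write $\xi=\xi^T+g(\xi,\nu)\nu$. Expanding each $g(\overline{\nabla}_{e_i}\xi,e_i)$ for $i\le n-1$ by the Gauss formula $\overline{\nabla}_{e_i}\xi^T=\nabla_{e_i}\xi^T-h(e_i,\xi^T)\nu$ and by $g(\overline{\nabla}_{e_i}\nu,e_i)=h(e_i,e_i)$, and adding the term $g(\overline{\nabla}_\nu\xi,\nu)$, one arrives at
\begin{equation*}
\mathrm{div}(\xi)=\mathrm{div}_{\partial M}(\xi^T)+g(\xi,\nu)\sum_{i=1}^{n-1}h(e_i,e_i)+g(\overline{\nabla}_\nu\xi,\nu).
\end{equation*}
Since $\partial M$ is totally geodesic, $h\equiv 0$, so the middle term drops; and from \eqref{eqconf2}, $\overline{\nabla}_\nu\xi=f\nu+\varphi(\nu)$, whence $g(\overline{\nabla}_\nu\xi,\nu)=f$ by skew-symmetry of $\varphi$. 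Combining with $\mathrm{div}(\xi)=nf$ from the first step gives $nf=\mathrm{div}_{\partial M}(\xi^T)+f$, i.e. $\mathrm{div}_{\partial M}(\xi^T)=(n-1)f$.

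Finally, for \eqref{eqint} I would use $\mathrm{div}(f\xi)=g(\overline{\nabla} f,\xi)+f\,\mathrm{div}(\xi)=g(\overline{\nabla} f,\xi)+nf^2$, integrate over $M$, and apply the divergence theorem; the boundary contribution $\int_{\partial M}f\,g(\xi,\nu)\,d\sigma$ vanishes because $f|_{\partial M}=0$, and rearranging yields $\int_M g(\overline{\nabla} f,\xi)\,dM=-n\int_M f^2\,dM$. The only step with any real content is the hypersurface divergence identity in the second paragraph; but once the totally geodesic hypothesis kills the second fundamental form, even that reduces to the skew-symmetry of $\varphi$, so I do not anticipate a genuine obstacle.
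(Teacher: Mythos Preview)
Your proposal is correct and follows essentially the same route as the paper: both take the trace of \eqref{eqderxitens} for $\mathrm{div}(\xi)=nf$, decompose $\mathrm{div}(\xi)$ along $\partial M$ via the orthonormal frame $\{e_1,\dots,e_{n-1},\nu\}$ and use the totally geodesic hypothesis together with the skew-symmetry of $\varphi$ to obtain $\mathrm{div}_{\partial M}(\xi^T)=(n-1)f$, and integrate $\mathrm{div}(f\xi)$ with the boundary term killed by $f|_{\partial M}=0$ for \eqref{eqint}. Your write-up is in fact slightly more explicit than the paper's about the Gauss formula and the vanishing boundary integral.
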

\begin{proof}
	For the first identity in \eqref{eqdiv} it suffices to  take the trace in \eqref{eqderxitens}. Let $\{e_1,\ldots,e_n\}$ be an orthonormal frame on $\partial M$ such that $e_n=\nu$, where $\nu$ is a unit outward  normal vector field along $\partial M$. Then,
	\begin{eqnarray*}
		div(\xi)
		&=&\sum_{i=1}^{n-1}g(\overline{\nabla}_{e_i}(\xi^T+g(\xi,\nu) \nu),e_i)+g(\overline{\nabla}_{\nu}\xi,\nu)\\
		&=&\sum_{i=1}^{n-1}\big(g(\overline{\nabla}_{e_i}\xi^T,e_i)+g(\xi,\nu) g(\overline{\nabla}_{e_i}\nu,e_i)
		+g( f\nu+\varphi(\nu),\nu)\big)\\
		&=&div_{\partial M}(\xi^T)+f,
	\end{eqnarray*}
	where we used that $\partial M$ is totally geodesic and $\varphi$ is skew symmetric. Hence,
	$$div_{\partial M}(\xi^T)=(n-1)f.$$
	Now we note that $div(f\xi)=g(\xi,\overline{\nabla} f)+nf^2,$ and so integrating over $M$  we obtain \eqref{eqint}.
\end{proof}
\begin{lemma}\label{lemmarici}Let $(M^n,g)$ be a smooth compact Riemannian manifold with  smooth boundary $\partial M$ and constant scalar curvature $R$. Let $\xi$ be  a smooth conformal vector field on $M$ with potential function $f$ such that $f=0$ on $\partial M$. Then,
	\begin{equation}\label{eqricci}
	\int_M Ric(\xi,\overline{\nabla} f)dM=-R\int_Mf^2 dM.
	\end{equation}
\end{lemma}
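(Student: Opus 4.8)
The plan is to write $Ric(\xi,\overline{\nabla} f)$ as a divergence plus a multiple of $f^{2}$ and then integrate, the boundary term being killed by $f|_{\partial M}=0$. Denote by $Ric(\xi)$ the vector field metrically dual to the $1$-form $Y\mapsto Ric(\xi,Y)$ (the same convention as in \eqref{bochT}). The quantity I would differentiate is $f\,Ric(\xi)$.

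The first step is to compute $div\big(Ric(\xi)\big)$. By the Leibniz rule for the contraction of the Ricci tensor with $\xi$,
\[
div\big(Ric(\xi)\big)=(div\,Ric)(\xi)+\langle Ric,\overline{\nabla}\xi\rangle,
\]
where $\langle\cdot,\cdot\rangle$ is the full metric contraction of $(0,2)$-tensors. The contracted second Bianchi identity gives $div\,Ric=\tfrac12\,\overline{\nabla} R$, which vanishes because $R$ is constant. Using the structure equation \eqref{eqderxitens}, namely $\overline{\nabla}\xi=fg+\varphi$, one gets $\langle Ric,\overline{\nabla}\xi\rangle=f\langle Ric,g\rangle+\langle Ric,\varphi\rangle=Rf$, since $\langle Ric,g\rangle=R$ and $\langle Ric,\varphi\rangle=0$ (the contraction of the symmetric tensor $Ric$ with the skew-symmetric tensor $\varphi$ is zero). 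Hence $div\big(Ric(\xi)\big)=Rf$.

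The second step is the product rule $div\big(f\,Ric(\xi)\big)=g\big(\overline{\nabla} f,Ric(\xi)\big)+f\,div\big(Ric(\xi)\big)=Ric(\xi,\overline{\nabla} f)+Rf^{2}$. Integrating over $M$ and applying the divergence theorem, the left-hand side becomes $\int_{\partial M}f\,g\big(Ric(\xi),\nu\big)\,d\sigma$, which is zero since $f=0$ on $\partial M$; this yields exactly \eqref{eqricci}.

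The argument is essentially routine, so there is no serious obstacle. The two things to keep track of are that constant scalar curvature is precisely what makes $div\,Ric$ drop out via Bianchi, and that the mixed contraction of the symmetric Ricci tensor with the skew part $\varphi$ of $\overline{\nabla}\xi$ vanishes. Note that, in contrast with Lemma \ref{lemmadiv}, no hypothesis on the second fundamental form of $\partial M$ is needed here: only $f|_{\partial M}=0$ is used, and only to annihilate the boundary integral.
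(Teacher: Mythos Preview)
Your proof is correct and follows essentially the same route as the paper: compute $div\big(f\,Ric(\xi)\big)$ using the contracted Bianchi identity (with $R$ constant) and the vanishing of $\langle Ric,\varphi\rangle$ by skew-symmetry, then integrate and use $f|_{\partial M}=0$ to kill the boundary term. The only cosmetic difference is that you split off the intermediate step $div\big(Ric(\xi)\big)=Rf$, whereas the paper expands $div\big(f\,Ric(\xi)\big)$ in one line.
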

\begin{proof}
	First note that, since the scalar curvature is constant, the second contracted Bianchi identity gives $div\,Ric=0$. Then, using \eqref{eqderxitens} we obtain
	\begin{eqnarray*}
		div(fRic(\xi))&=&Ric(\overline{\nabla} f,\xi)+f(div Ric)(\xi)+fg( Ric,\overline{\nabla} \xi)\\
		&=&Ric(\overline{\nabla }f,\xi)+Rf^2,
	\end{eqnarray*}
	where in the last equation we used the skew-symmetry of $\varphi$ to conclude that $g( Ric,\varphi)=0$. Integrating the above expression
	over $M$  we get \eqref{eqricci}.
\end{proof}

\section{Proof of the Main Results }

\subsection{Proof of Theorem \ref{thM}}
\begin{proof}
	Let $(M^n,g)$ be an $n$-dimensional compact connected Riemannian manifold with smooth boundary, $\overline{\nabla} f$ be a nonzero conformal vector field on $M$ such that $\overline{\Delta}f=0$ on $\partial M$ and $Y=\overline{\nabla}\big(\dfrac{\overline{\Delta} f}{n}\big)+c\overline{\nabla}f$, then
	\begin{eqnarray*}
		Ric(Y,Y)&=&Ric\big(\overline{\nabla} \big(\dfrac{\overline{\Delta} f}{n}\big),\overline{\nabla} \big(\dfrac{\overline{\Delta} f}{n}\big)\big)+c^2Ric(\overline{\nabla} f,\overline{\nabla} f)+2cRic\big(\overline{\nabla} \big(\dfrac{\overline{\Delta} f}{n}\big),\overline{\nabla} f\big).
	\end{eqnarray*}
	Integrating the previous equation  and using Lemma \ref{lrb} we have
	\begin{eqnarray*}
		\int_{M}Ric(Y,Y)dM&=&\dfrac{1}{n^2}\int_{M}Ric\big(\overline{\nabla}(\overline{\Delta }f),\overline{\nabla}(\overline{\Delta } f)\big)dM+\dfrac{c^2(n-1)}{n}\int_{M}(\overline{\Delta} f)^2dM\\&-&\dfrac{2c(n-1)}{n^2}\int_{M}\mid\overline{\nabla}(\overline{\Delta }f)\mid^2dM-\dfrac{c^2(n-1)}{n}\int_{\partial M}g( (\overline{\Delta} f)\overline{\nabla}f, \nu ) d\sigma.
	\end{eqnarray*}
	
	Since $\overline{\Delta} f$ is a nonconstant function vanishing on $\partial M$, we have
	$$\lambda_1 \leq \dfrac{\displaystyle\int_{M}\mid\overline{\nabla}(\overline{\Delta} f)\mid^2dM}{\displaystyle\int_{M}(\overline{\Delta }f)^2dM},$$
	where $\lambda_1$ is the first nonzero eigenvalue of the Laplacian on $M$ with Dirichlet boundary condition. Using this fact in the above equation, we get
	
	\begin{eqnarray*}
		\int_{M}Ric(Y,Y)dM&\leq&\dfrac{1}{n^2}\int_{M}\Big (Ric\big(\overline{\nabla}(\overline{\Delta} f),\overline{\nabla}(\overline{\Delta} f)\big)\big.-(n-1)\big.\big.\big(2-\dfrac{nc}{\lambda_1}\big)c\mid\overline{\nabla}(\overline{\Delta} f)\mid^2\Big) dM.
	\end{eqnarray*}
	
	The hypothesis on the Ricci tensor and the above inequality imply that
	$$\overline{\nabla} \big(\dfrac{\overline{\Delta} f}{n}\big)=-c\overline{\nabla} f.$$
	Since $\overline{\nabla}f$ is a conformal gradient vector field, it satisfies $\overline{\nabla}_{X}\overline{\nabla}f=\dfrac{\overline{\Delta} f}{n} X$, $X\in  \mathfrak{X}(M)$, which yields
	\begin{equation}\label{eqthm1}
	\overline{\nabla}_{X}\overline{\nabla} \big( \overline{\Delta} f\big)=-c\big(\overline{\Delta} f\big) X.
	\end{equation}
	Hence,  the hypothesis $c>0$, \eqref{eqthm1} and the boundary condition $\overline{\Delta} f|_{\partial M}=0$  enable us to apply   Theorem B in \cite{Reilly2} to conclude that $M$ is isometric to a geodesic ball of $\mathbb{S}^n(c)$. Furthermore, the function $f$ satisfies $\overline{\Delta} f(x)= \cos(\sqrt{c}\,d(x,x_o))$, where $x_0$ is the center of the geodesic ball and $d$  is the distance function on $\mathbb{S}^n(c)$. Accordingly, the assumption $\overline{\Delta} f|_{\partial M}=0$ implies that the boundary $\partial M$ is an equator.  Therefore, $M$ is isometric to a hemisphere of $\mathbb{S}^{n}(c)$.
\end{proof}
\subsection{Proof of Theorem \ref{thE}}
\begin{proof} First we prove that $c>0$. Indeed, since $(M^n,g)$ is a compact connected Einstein manifold, we have $\dfrac{R}{n}=(n-1)c$ and so $Ric(\overline{\nabla} f)=(n-1)c\overline{\nabla} f$. Thereby, using (\ref{ricnablaf})  we obtain
	\begin{equation}\label{eqlaplac}
	\dfrac{1}{n}\overline{\nabla}(\overline{\Delta} f)=-c\overline{\nabla} f.
	\end{equation}
	Thus, by \eqref{eqlaplac} we get
	\begin{equation}\label{eqeigenfunct}
	\overline{\Delta} \big(\dfrac{\overline{\Delta} f}{n}\big)=-c\overline{\Delta} f.
	\end{equation}
	Since $\overline{\Delta} f$ is nonconstant, by \eqref{eqeigenfunct} the constant $c$ is a nonzero eigenvalue of $\overline{\Delta}$
	with Dirichlet boundary condition, which implies that $c>0$. Thus, using \eqref{eqlaplac} we proceed as in the proof of Theorem \ref{thM} to conclude that   $M$ is isometric to a hemisphere  $\mathbb{S}^{n}_+(c)$.
\end{proof}
\subsection{Proof of Theorem \ref{thEnergy}}
\begin{proof}
	Using \eqref{bochT} and the skew-symmetry of $\varphi$, we obtain
	\begin{eqnarray*}
		div(\overline{\nabla}^2 f(\xi))
		&=&g(\overline{\nabla} (\overline{\Delta }f),\xi)+Ric(\overline{\nabla} f,\xi)+f\overline{\Delta} f.
	\end{eqnarray*}
	On the other hand, since
	$div(\overline{\Delta }f \xi)=g(\overline{\nabla }(\overline{\Delta} f),\xi)+nf\overline{\Delta} f$, we have
	\begin{equation}\label{eqdivhes1}
	div(\overline{\nabla}^2 f(\xi)-\overline{\Delta} f \xi)=-(n-1)f\overline{\Delta} f +Ric(\overline{\nabla} f,\xi).
	\end{equation}
	Integrating \eqref{eqdivhes1} over $M$ and using the divergence theorem yields
	\begin{equation}\label{eqdivhes}
	\int_{\partial M} (\overline{\nabla}^2 f(\xi,\nu)- g(\xi,\nu)\overline{\Delta} f)d\sigma=-(n-1)\int_Mf\overline{\Delta} f dM+\int_M Ric(\overline{\nabla} f,\xi)dM.
	\end{equation}
	Since $\dfrac12\overline{\Delta} f^2=|\overline{\nabla} f|^2+f\overline{\Delta} f$, $\overline{\Delta} f^2=2\,div(f\overline{\nabla} f)$ and  $f|_{\partial M}=0$, we easily get
	$$\int_M |\overline{\nabla }f|^2dM=-\int_Mf\overline{\Delta} f\,dM.$$
	Thus, by Lemma \ref{lemmarici}, we have
	$$\int_M|\overline{\nabla }f|^2dM=\frac{R}{n-1}\int_Mf^2 dM+ \frac{1}{n-1}\int_{\partial M} (\overline{\nabla}^2 f(\xi,\nu)- g(\xi,\nu)\overline{\Delta} f)dM.$$
	Using the Gauss-Weingarten  equations and  the fact that $\partial M$ is totally geodesic, we obtain
	\begin{eqnarray*}
		\int_{\partial M} (\overline{\nabla}^2 f(\xi,\nu)- g(\xi,\nu)\overline{\Delta} f)d\sigma  &=& \int_{\partial M} g(\nabla f_\nu,\xi^T) d\sigma- \int_{\partial M}g(\xi,\nu)\Delta f d\sigma.
	\end{eqnarray*}
	So we can use Lemma \ref{lemmadiv} to conclude that
	\begin{eqnarray*}
		\int_{\partial M} (\overline{\nabla}^2 f(\xi,\nu)- g(\xi,\nu) \overline{\Delta} f)d\sigma&=& -\int_{\partial M} f_\nu div_{\partial M}(\xi^T) d\sigma-\int_{\partial M} f \Delta (g(\xi,\nu) )d\sigma\\
		&=& -(n-1)\int_{\partial M} f_\nu f d\sigma-\int_{\partial M} f \Delta( g(\xi,\nu) )d\sigma\\
		&=&0,
	\end{eqnarray*}
	which implies that
	\begin{equation}\label{eqenergia}
	\int_M|\overline{\nabla }f|^2 dM=\frac{R}{n-1}\int_Mf^2 dM.
	\end{equation}
	Clearly  \eqref{eqenergia} implies that $R>0$. Furthermore, using \eqref{eqint}  and \eqref{defenergia},  we have
	\begin{eqnarray*}
		\int_M|\overline{\nabla} f+c\xi|^2 dM &=&c^2\int_M|\xi|^2 dM-\int_M|\overline{\nabla} f|^2 dM  =2c^2\big(E(\xi)-c^{-2}E(\overline{\nabla} f)\big).
	\end{eqnarray*}
	Hence, we get $E(\xi)-c^{-2}E(\overline{\nabla} f)\geq 0$ and  if equality occurs we must have
	\begin{equation}\label{eqnabla}
	\overline{\nabla} f=-c\xi.
	\end{equation}
	Moreover, taking covariant derivative in \eqref{eqnabla} and by \eqref{eqderxitens} we deduce
	$$\overline{\nabla}_X\overline{\nabla} f+cfX=-c\varphi (X).$$
	Whence, since $\varphi$ is skew symmetric,  we finally obtain for any $X\in \mathfrak{X}(M)$
	$$ \overline{\nabla}_X\overline{\nabla} f=-cf X .$$
	
	Therefore, by the hypothesis on $f$, we can apply  Theorem B in \cite{Reilly2}
	to ensure that $M$ is isometric to a geodesic ball on $\mathbb{S}^n(c)$. Since  $\partial M$ is totally geodesic, we conclude that  $M$ is isometric to a  hemisphere of $\mathbb{S}^n(c)$.
\end{proof}

\subsection*{Acknowledgements} The first named author was partially supported by grant from CNPq-Brazil. The authors would like to thank Professor  Abd\^enago Barros for fruitful conversations about the results. The authors would like to thank the referee for  valuable comments on this manuscript.


\normalsize

\end{document}